\theoremstyle{plain}
\newtheorem{thm}{Theorem}
\newtheorem{lem}{Lemma}
\newtheorem{prop}{Proposition}
\newtheorem{cor}{Corollary} 
\theoremstyle{remark}
\numberwithin{equation}{section}
\def\ens{\ensuremath}                                     
\newcommand\mtb[1]{\ens{\mathbb{#1}}}                     
	   \newcommand{\Q}{\mtb{Q}}   	\newcommand{\R}{\mtb{R}}
\newcommand\mtc[1]{\ens{\mathcal{#1}}}           
\newcommand{\del}{\ens{\delta}}  \newcommand{\eps}{\ens{\varepsilon}}
\newcommand{\gam}{\ens{\gamma}}	\newcommand{\lam}{\ens{\lambda}}
\newcommand\bld[1]{\ens{\boldsymbol{#1}}} 
\newcommand{\bfx}{\ens{\bld x}}
\newcommand{\abs}[1]{\left|#1\right|}
\newcommand\hl[1]{{\center \color{red} \ens{\bs\bs\bs} \\}} 
\newcommand\hsp[1]{\mbox{}\hspace{#1mm}} 
\newcommand\hspm[1]{\mbox{}\hspace{-#1mm}} 
\newcommand\vsp[1]{\par \vspace{#1mm}} 
\def\bs{{\bigstar}}                     
\def\ui{\ens{[0,1]}}                   
\newcommand\mbc[1]{}                                                    
\renewcommand{\v}{\vsp}
\newcommand{\h}{\hsp}
\newcommand{\Dst}{\displaystyle}
\def\H{\ens{\mtc H}}  \def\HP{\ens{\mtc H^{+}}} \def\hp{\HP} 
\def\F{\ens{\mtc F}} 
\def\C{\ens{\mtc C}} 
\title[A dichotomy for the stability of arithmetic progressions]{A Dichotomy for the
stability of \\ arithmetic progressions}
\author[M. Boshernitzan]{Michael Boshernitzan}
\address{Department of Mathematics, Rice University, Houston, TX~77005, USA}
\email{michael@rice.edu}
\thanks{The authors were supported in part by NSF Grants DMS-1102298, DMS-1004372}
\author[J. Chaika]{Jon Chaika}
\address{Department of Mathematics, University of Chicago, 5734 S.\ University Avenue, 
Chicago, IL 60637, USA}
\email{jonchaika@gmail.com}
\date{March 2013}
\keywords{Meager subset, arithmetic progressions, homeomorphism}
\begin{document}
\maketitle
\begin{abstract}
Let $\H$ stand for the set of homeomorphisms\, $\phi\colon\!\ui\to\ui$.  
We prove the following dichotomy for Borel subsets $A\subset \ui$:
\begin{itemize}
\item either there exists a homeomorphism $\phi\in\H$ such that the image
$\phi(A)$  contains no $3$-term arithmetic progressions;
\item or, for every $\phi\in \H$, the image $\phi(A)$ contains 
arithmetic progressions of arbitrary finite length.
\end{itemize}
\v1
\noindent In fact, we show that the first alternative holds if and only if the set $A$ is meager
(a countable union of nowhere dense sets).
\end{abstract}
\section{Definitions}
Let $\R$, $\Q$ denote the sets of real and rational numbers, respectively.
By an AP (arithmetic progression)  we mean a finite strictly increasing sequence 
in $\R$ of the form $\bfx=(x+kd)_{k=0}^{n-1}$,  with  $d>0$ and $n\geq3$. 
The convention is sometimes abused by identifying the sequence $\bfx$ with the set 
of its elements. An AP is completely determined by its first term\, $x=\min \bfx$, its
length $n=\abs{\bfx}$ and its step (difference) $d>0$.

A subset $S\subset\R$ is called  FAP (free of APs) it it does not contain $3$-term APs.

A subset $S\subset\R$  is called RAP  (rich in APs)  if it contains APs of arbitrary large
finite length.

Denote by $\H$ the set of homeomorphisms $\phi\colon\ui\to\ui$ of the unit interval.
The result presented in the abstract can be restated as follows.
\begin{thm}\label{thm:central}
Let  $S\subset \ui$ be a Borel subset. Then exactly one of 
the following two assertions holds:
\begin{enumerate}
\item[\em(1)] (either) there exists a $\phi\in\H$ such that $\phi(S)$ is FAP;
\item[\em(2)] (or)  $\phi(S)$ is RAP for every $\phi\in\H$.
\end{enumerate}
Moreover, {\em(1)} holds if and only if $S$ is meager.
\end{thm}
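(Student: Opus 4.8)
The plan is to establish the stated dichotomy as two implications together with the meagerness characterisation. First note that the two alternatives are mutually exclusive: an FAP set contains no $3$-term AP and so is certainly not RAP, hence (1) and (2) cannot both hold. It therefore suffices to prove (i) if $S$ is non-meager then $\phi(S)$ is RAP for every $\phi\in\H$, and (ii) if $S$ is meager then $\phi(S)$ is FAP for some $\phi\in\H$. Because a homeomorphism of $\ui$ carries nowhere dense sets to nowhere dense sets and commutes with countable unions, it preserves both meagerness and non-meagerness; thus in (i) it is enough to show that every non-meager Borel set is RAP and to apply this to the non-meager Borel set $\phi(S)$. Granting (i) and (ii): if $S$ is meager then (ii) gives a $\phi$ with $\phi(S)$ FAP, so $\phi(S)$ is not RAP, alternative (2) fails, and (1) must hold; if $S$ is non-meager then (i) gives (2), while (1) fails since no image can be FAP. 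This yields ``exactly one alternative holds'' together with ``(1) $\Leftrightarrow$ $S$ meager''.

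For (i) I would use that a Borel set has the Baire property, so a non-meager Borel $S$ is comeager on some open interval $I$; thus $S\cap I$ contains a dense $G_\delta$ set $G\subseteq I$. Fix $n\ge 3$ and work in the open parameter region $R=\{(x,d): d>0,\ [x,x+(n-1)d]\subseteq I\}\subseteq\R^2$, which is nonempty. For each $k=0,\dots,n-1$ the set $G_k=\{(x,d)\in R: x+kd\in G\}$ is $G_\delta$ (a preimage of $G$ under a continuous map) and dense in $R$ (for $k\ge 1$ the map $(x,d)\mapsto x+kd$ is open and $G$ is dense; for $k=0$ density in the $x$-coordinate suffices). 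Since $R$ is a nonempty open subset of $\R^2$, hence a Baire space, $\bigcap_{k}G_k$ is a dense $G_\delta$, in particular nonempty. Any $(x,d)$ in it has $d>0$ and $\{x+kd\}_{k=0}^{n-1}\subseteq G\subseteq S$, an $n$-term AP. As $n$ is arbitrary, $S$ is RAP.

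For (ii) I would run a Baire category argument in $\H$ itself, made a complete metric space by $d(\phi,\psi)=\|\phi-\psi\|_\infty+\|\phi^{-1}-\psi^{-1}\|_\infty$. Write the meager $S$ as $\bigcup_n F_n$ with $F_n$ closed, nowhere dense and increasing; since the $F_n$ increase, $\bigcup_n\phi(F_n)$ is FAP if and only if every $\phi(F_n)$ is FAP, so it suffices to find one $\phi$ with $\phi(F_n)$ FAP for all $n$. For fixed closed nowhere dense $F$ and $\eps>0$ put $\mathcal B_F^\eps=\{\phi:\phi(F)\text{ contains a }3\text{-term AP of common difference}\ge\eps\}$; projecting the closed AP-condition along the compact factor $F^3$ (the gap being bounded below by $\eps$) shows $\mathcal B_F^\eps$ is closed, and $\{\phi:\phi(F)\text{ contains a }3\text{-term AP}\}=\bigcup_k\mathcal B_F^{1/k}$. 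The problem thus reduces to showing each $\mathcal B_F^\eps$ is nowhere dense; being closed, this amounts to its complement being dense. Granting this, $\bigcap_n\{\phi:\phi(F_n)\text{ FAP}\}$ is comeager, hence nonempty, and any $\phi$ in it makes every $\phi(F_n)$ FAP, so $\bigcup_n\phi(F_n)$ is FAP and its subset $\phi(S)$ is FAP.

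The main obstacle is precisely this density (perturbation) lemma: given $\phi_0\in\H$, $\del>0$ and $\eps>0$, produce $\psi\in\H$ with $d(\psi,\phi_0)<\del$ such that $\psi(F)$ has no $3$-term AP of gap $\ge\eps$. Setting $K=\phi_0(F)$ (again closed and nowhere dense) I would look for a near-identity $g$ with $\psi=g\circ\phi_0$, controlling $d(\psi,\phi_0)$ through $\|g-\mathrm{id}\|_\infty$ and the uniform continuity of $\phi_0^{-1}$. Using that $K$ is nowhere dense, cover $K$ by finitely many disjoint intervals $J_1,\dots,J_m$ of length $<\del$ whose endpoints avoid $K$, so each $K\cap J_i$ is a compact subset of $\mathrm{int}(J_i)$. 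Choose centres $t_i\in\mathrm{int}(J_i)$ \emph{generically}: the finitely many relations $t_a+t_c=2t_b$ over distinct triples cut out a Lebesgue-null union of hyperplanes in $\prod_i\mathrm{int}(J_i)$, so one can select $(t_i)$ with $|t_a+t_c-2t_b|\ge\eta_0$ for all distinct triples and some $\eta_0>0$. Finally let $g$ fix the endpoints of each $J_i$ and squeeze $K\cap J_i$ into a cluster of radius $\eta<\min(\eps,\eta_0/4)$ about $t_i$, and be the identity off $\bigcup_i J_i$. In any $\eps$-AP of $g(K)$ the three points lie in three distinct clusters (two points of one cluster are within $\eta<\eps$, impossible for gap $\ge\eps$), whence $|p+r-2q|\ge\eta_0-4\eta>0$, a contradiction; thus $g(K)=\psi(F)$ has no $\eps$-AP. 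Checking that the squeezing homeomorphisms exist with $\|g-\mathrm{id}\|_\infty<\del$, and that the margin $\eta_0$ is fixed before $\eta$ is chosen, are the routine but essential points underlying this step.
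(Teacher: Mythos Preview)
Your proposal is correct and follows essentially the same route as the paper: for the non-meager direction you use the Baire property to land in a comeager-in-an-interval situation (the paper simply cites this as $\mathcal E_4\subset\mathcal E_2$), and for the meager direction you reduce to closed nowhere dense sets, run a Baire category argument in the space of homeomorphisms, and prove the key perturbation step by covering $\phi_0(F)$ with short intervals and squeezing onto small clusters about a finite configuration chosen to avoid $3$-term APs. The only cosmetic differences are that the paper works inside the closed set $\mathcal F\subset\mathcal C$ of nondecreasing surjections (with $\mathcal H^{+}$ residual there) rather than metrizing $\mathcal H$ via $d(\phi,\psi)=\|\phi-\psi\|_\infty+\|\phi^{-1}-\psi^{-1}\|_\infty$, and that its openness step (Lemma~3) carries an $\eps\to 2\eps$ loss whereas your direct compactness argument shows $\mathcal B_F^{\eps}$ is closed outright.
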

Recall some basic relevant definitions. Let $S\subset\R$. A set
$S$ is called {\em nowhere dense} if its closure $\bar S\subset\R$ has empty interior.
$S$ is called {\em meager} (a set of first category), if it is a countable union 
of nowhere dense sets. 
$S$ is called {\em residual}, or {\em co-meager}, if $\R\!\setminus\!S$ is meager;
$S$ is called {\em residual} in a subinterval $X\subseteq\R$ if the complement 
$X\!\setminus\! S$  is meager.
Finally, $S$ is called {\em the set of second category} if it is not meager. 

The following proposition lists some ``largeness'' properties of a set  $A\subset\R$  
which force it to be RAP. Denote by $\lam$ the Lebesgue measure on $\R$.
\begin{prop}[Classes of RAP sets]\label{prop:rap}

Let $A\subset\R$. Then $S$ is RAP if $S$ belongs to at least one of the following four classes:
\begin{align*}
\mtc E_1&=\{\text{$S\subset\R\mid S$  is Lebesgue measurable with  
   $0<\lam(S)\leq\infty$}\},\\
\mtc E_2&=\{\text{$S\subset\R\mid S$  is residual in some interval $X\subset\R$ of positive length}\},\\
\mtc E_3&=\{\text{$S\subset\R\mid S$  is winning in Schmidt's game}\},\\
&\h{19}\text{\small (several versions of Schmidt's games are possible, see \cite{Mc},\cite{Sc})},\\
\mtc E_4&=\{\text{$S\subset\R\mid S$  is Borel and not meager}\}
\end{align*}
\end{prop}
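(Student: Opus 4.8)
The plan is to handle the four classes separately, since they encode different notions of largeness, while exploiting one recurring device. To place an $n$-term AP inside $S$ it suffices to fix a step $d>0$ and show that the finite intersection $\bigcap_{i=0}^{n-1}(S-id)$ is nonempty: any $t$ in it yields the progression $(t+id)_{i=0}^{n-1}\subset S$. Each of the four classes supplies a mechanism forcing this intersection to be nonempty, and since $n$ is arbitrary this shows $S$ is RAP.

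For $\mtc E_1$ I would invoke the Lebesgue density theorem. Fix the target length $n$, set $\eps=1/(4n)$, choose a density point $x_0\in S$, and pick nested intervals $J\subset I$ centered at $x_0$ with $\lam(I\minus S)<\eps\,\lam(I)$ and $\lam(J)=\tfrac12\lam(I)$. For every $d>0$ small enough that $J+(n-1)d\subset I$, each translate satisfies $\lam\big(J\minus(S-id)\big)=\lam\big((J+id)\minus S\big)\le\lam(I\minus S)<2\eps\,\lam(J)$. Summing over $i=0,\dots,n-1$ shows the bad set $\bigcup_{i}\big(J\minus(S-id)\big)$ has measure $<\tfrac12\lam(J)$, so $\bigcap_{i}(S-id)\cap J$ has positive measure and in particular is nonempty, producing the AP.

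For $\mtc E_2$ the same skeleton works with measure replaced by Baire category. If $S$ is residual in an interval $X$, pick a nondegenerate closed subinterval $J\subset X$ and a $d>0$ with $J+(n-1)d\subset X$. Each set $J\minus(S-id)$ is a translate of a subset of the meager set $X\minus S$, hence meager, so $\bigcup_{i=0}^{n-1}\big(J\minus(S-id)\big)$ is meager; since $J$ is not meager in itself, the intersection $\bigcap_{i}(S-id)\cap J$ is comeager in $J$, in particular nonempty. The class $\mtc E_4$ reduces to this: a Borel set has the Baire property, so a non-meager Borel $S$ can be written $S=U\,\triangle\,M$ with $U$ open and nonempty and $M$ meager; any interval $X\subset U$ then satisfies $X\minus S\subset M$, so $S$ is residual in $X$ and the $\mtc E_2$ argument applies.

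The class $\mtc E_3$ cannot be absorbed into the others — a winning set can be simultaneously null and meager, as the badly approximable numbers show — yet it is the cleanest. Here I would use two standard features of Schmidt's game: being $\alpha$-winning is invariant under isometries, and a countable intersection of $\alpha$-winning sets is again $\alpha$-winning, hence nonempty (indeed of full Hausdorff dimension). Each translate $S-id$ is then $\alpha$-winning, so $\bigcap_{i=0}^{n-1}(S-id)$ is $\alpha$-winning and nonempty for \emph{every} $d>0$, yielding APs of all lengths at once. I expect the genuine difficulty to lie in the $\mtc E_1$ case: arranging the density estimate to hold uniformly across all $n$ translates while $d$ stays strictly positive, so that the surviving intersection has positive measure rather than merely small measure. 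The category and game cases are soft once the reformulation via intersections of translates is in place, and $\mtc E_4$ is only a matter of quoting the Baire property of Borel sets.
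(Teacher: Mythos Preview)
Your proof is correct and follows essentially the same approach as the paper: Lebesgue density points for $\mtc E_1$, closure of residual (resp.\ winning) sets under finite intersections and translations for $\mtc E_2$ and $\mtc E_3$, and the Baire property of Borel sets to reduce $\mtc E_4$ to $\mtc E_2$. The paper's proof is a terse sketch of exactly these ideas; your version simply makes the unifying device---nonemptiness of $\bigcap_{i=0}^{n-1}(S-id)$---explicit and fills in the density-point estimate that the paper leaves to the reader.
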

\begin{proof} For a set $S\in\mtc E_1$, one easily produces APs near any its 
Lebesgue density point. The argument for the classes  $E_2$ and $E_3$ is even easier
because residual subsets and the class $E_3$ are closed under finite 
(and even countable) intersections.

Finally, the sets $S\in\mtc E_4$ are RAP because $\mtc E_4\subset \mtc E_2$.
(A Borel subset $S\subset\R$ of second category must be residual in some subinterval,
see e.g.~Proposition 3.5.6 and Corollary 3.5.2 in \cite[page 108]{Sr}).
\end{proof}

Note that the the problems of finding finite or countable configurations $F$ in sets 
$S\subset\R$, under various ``largeness'' metric assumptions on $S$, has been considered 
by several mathematicians.

Following Kolountzakis \cite{Kol}, a set $F$ is called {\em universal} for a class $\mtc E$ of 
subsets of reals if $F\ll S$  for all $S\in \mtc E$. Henceforth $F\ll S$ means
that  $S$ contains an affine image of $F$, i.e. that $aF+b\subset S$, for some  
$a,b\in\R,\,a>0$. For example, $S$ is RAP iff $\{1,2,\ldots,n\}\ll S$ for all $n\geq1$; 
$S$ is FAP iff $\{1,2,3\}\not\!\ll S$.

Every finite subset of reals is universal for all the classes $\mtc E_k$,
$1\leq k\leq4$. 
Every bounded countable subset is universal for the classes $\mtc E_k$, $2\leq k\leq4$.

An old question of Erd\"os is whether there is an universal infinite set $F\subset\R$
for the class $\mtc E_1$ (of sets of positive measure).
The question is still open even though some families of countable sets $F$ are 
shown not to contain universal functions, see Kolountzakis \cite{Kol}, 
Paul and Laczkovich \cite{PL} and references there. 
In \cite{PL} an elegant combinatorial characterization of universal sets $F$
(for the class $\mtc E_1$) is given which reproduces earlier results in the subject.

Keleti \cite{Ke} constructed a compact set $A\subset\ui$ of Hausdorff dimension $1$ 
which is FAP; on the other hand, Laza and Pramanik in \cite{LP} showed that under certain 
assumptions (on the Fourier transform of supported measure) compact sets
of fractional dimension close to $1$ must contain $3$-term APs (i.e., cannot be FAP).
We refer to \cite{LP} for survey of related questions.

The central result of the paper, Theorem \ref{thm:central}, completely characterizes 
the topological (rather than metric) properties of a Borel set $S\subset\R$ which 
guarantee it to be RAP.
This theorem is an immediate consequence of the following proposition and the fact 
that the sets $S\in\mtc E_4$ must be RAP (Proposition \ref{prop:rap}).
\begin{prop}\label{prop:meag}
For every meager subset $C\subset\ui$, there is a map $\phi\in\H$, $\phi\colon\!\ui\to\ui$, 
such that $\phi(C)$  is FAP.
\end{prop}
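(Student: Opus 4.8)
The plan is to reduce the statement to an increasing union of closed nowhere dense sets and then to build $\phi$ by an infinite \emph{place-and-freeze} induction, spreading the successive pieces of $C$ into very thin (lacunary) clusters positioned so as to miss every midpoint. First I would arrange the data: writing the meager set as $C\subseteq\bigcup_{n\ge1}F_n$ with each $F_n$ closed and nowhere dense and $F_n\subseteq F_{n+1}$ (replace the given nowhere dense pieces by the closures of their partial unions). The key elementary reduction is that a set is FAP as soon as every one of its three-point subsets is, and any three points of $\phi\bigl(\bigcup_n F_n\bigr)$ already lie in a single $\phi(F_N)$; hence $\phi\bigl(\bigcup_n F_n\bigr)$ is FAP if and only if the closed set $\phi(F_n)$ is FAP for \emph{every} $n$. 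Since $\phi(C)\subseteq\phi\bigl(\bigcup_n F_n\bigr)$, it suffices to produce a single $\phi\in\H$ for which each $\phi(F_n)$ is FAP. I would stress here the obstruction that rules out the naive strategy: one cannot simply map $C$ into a fixed closed FAP set $K$, because a meager $C$ may be dense, and then $\phi(\overline C)=\overline{\phi(C)}\subseteq K$ would force $K=\ui$. The images $\phi(F_n)$ must therefore be made FAP individually, while their union is allowed to be dense.

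Next I would fix the architecture of $\phi$. I would construct $\phi=\lim_n\Phi_n$ with $\Phi_n=g_n\circ\cdots\circ g_1$, where each $g_n\in\H$ fixes $D_{n-1}:=\Phi_{n-1}(F_{n-1})$ pointwise and is supported inside the complementary gaps of $D_{n-1}$. In this way a point of $C$, once placed, never moves again, so $\phi(F_n)=\Phi_n(F_n)=:D_n$ and $D_{n-1}\subseteq D_n$. By refining the supporting gaps at each stage I would keep $\|g_n-\mathrm{id}\|_\infty$ and $\|g_n^{-1}-\mathrm{id}\|_\infty$ summable, so that $\Phi_n$ is uniformly Cauchy with uniformly Cauchy inverses and the limit $\phi$ is a genuine homeomorphism of $\ui$.

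The engine is the inductive step, and I would carry it out under the invariant that $D_{n-1}$ is FAP and that its associated midpoint-type obstruction sets $\tfrac12(D_{n-1}+D_{n-1})$, $2D_{n-1}-D_{n-1}$, and the finitely many analogous integer combinations, are all nowhere dense. Given this, I would take a finite cover of the compact nowhere dense set $\Phi_{n-1}(F_n)$ by short intervals lying in the gaps of $D_{n-1}$, and inside each interval reposition (by a homeomorphism fixing its endpoints) the new points of $F_n$ into a very thin cluster, placing each cluster inside an obstruction-free subinterval—such subintervals exist precisely because the obstruction sets are nowhere dense. Making the clusters mutually lacunary and internally lacunary, I would then verify that none of the four types of three-term progressions can occur (old–old–old is excluded since $D_{n-1}$ is FAP; old–old–new, old–new–new and new–new–new are excluded by the obstruction-avoidance and the lacunarity), and that the enlarged set $D_n$ again has nowhere dense obstruction sets, closing the induction. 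Combined with the previous steps this yields $\phi(F_n)=D_n$ FAP for all $n$, hence $\phi(C)$ FAP.

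The hard part will be the inductive step, specifically the \emph{simultaneous} control of two competing requirements: on one hand, keeping the whole finite family of obstruction sets nowhere dense throughout the construction, which is what supplies the free room to seat the clusters and thereby kills the cross-gap and mixed progressions; on the other hand, keeping the perturbations $g_n$ small enough for the limit to remain a homeomorphism. Balancing ``spread out enough to be lacunary'' against ``move little enough to converge'' is the crux, and it is exactly here that maintaining lacunarity of the images—so that their sum and difference sets stay nowhere dense—must be exploited as the propagating invariant.
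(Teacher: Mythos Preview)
Your approach is genuinely different from the paper's. The paper does not build $\phi$ by hand; it proves the stronger statement that the set of $\phi\in\H^+$ with $\phi(C)$ FAP is \emph{residual} in $\H^+$. After the same reduction to nested closed nowhere dense $U_k$, the paper sets $\H_\eps(C)=\{\phi\in\H^+:\phi(C)\text{ has no 3-term AP of step }\geq\eps\}$ and shows each $\H_\eps(C)$ contains a dense open subset of $\H^+$: density comes from a single perturbation pushing $\phi(C)$ into short intervals clustered around a finite AP-free configuration, and openness from a compactness argument. Intersecting over $\eps=1/k$ and over the $U_k$ finishes. The gain is that at each step one only needs to kill APs at one fixed scale, which is a finite, open condition; Baire category replaces your entire freeze-and-propagate bookkeeping.

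Your scheme may be workable, but the step you yourself flag as the crux has a real gap as written. You need the invariant ``$\tfrac12(D_{n-1}+D_{n-1})$ and $2D_{n-1}-D_{n-1}$ are nowhere dense'' in order to seat the new clusters, and then you must show $D_n=D_{n-1}\cup N$ again satisfies it. The dangerous cross term is $D_{n-1}+N$. Making $N$ ``internally and mutually lacunary'' controls $N+N$ and $2N-N$, but it says nothing about $D_{n-1}+N$: that sumset depends on the fixed set $D_{n-1}$, and a compact nowhere dense set (even one whose self-sumset is nowhere dense) can have $D_{n-1}+N$ contain an interval for many Cantor sets $N$. So ``lacunarity of $N$'' is not the right propagating invariant. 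What would close the induction is a stronger, self-reproducing hypothesis on $D_n$ itself---for instance, upper box dimension $0$, since then every integer combination $aD_n+bD_n$ automatically has upper box dimension $0$, hence is compact of measure zero, hence nowhere dense. You would then have to argue that the new part can always be squeezed (by an $\epsilon_n$-small homeomorphism of each gap, fixing $D_{n-1}$) into a set of box dimension $0$; this is plausible but is substantial extra work not present in your outline. The paper's Baire category route sidesteps exactly this difficulty.
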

A stronger version of Proposition \ref{prop:meag} (Proposition \ref{prop:meag2})
is presented and proved in the next section.
\section{Proofs of Propositions \ref{prop:meag} and \ref{prop:meag2}}\label{sec:pp1}
Denote by $\C$ the Banach space of continuous maps $f\colon \ui\to\R$
equipped with the norm 
\begin{equation}\label{eq:norm}
\|f\|=\|f\|_{\infty}=\max_{x\in\ui}\abs{f(x)}.
\end{equation}
Denote by $\F$ and $\hp$ the following subsets of $\C$:
\begin{align}
\F&=\{f\in\C\mid f \text{ is non-decreasing with }f(0)=0;\, f(1)=1\},\label{eq:F}\\
\hp&=\{f\in\F\mid f \text{ is injective}\}=\{f\in\H\mid f \text{ is increasing on }\ui\}\label{eq:H}.
\end{align} 

The set $\F$ is a closed subset of $\C$, while $\hp$ is residual in $\F$. 
(Indeed, 
\[
\Dst\hp=\hspm3\bigcap_{\substack{0<a<b<1\\a,b\in\Q}}F_{a,b}; \qquad
F_{a,b}=\big\{f\in\F\mid f(a)<f(b)\big\}
\]  
where $\Q$ stands for the set of rationals, and $F_{a,b}$  
are open dense subsets of $\F$).

The following proposition is a stronger version of Proposition \ref{prop:meag}.
\begin{prop}\label{prop:meag2}
Let \ $C\subset\ui$  be a meager subset. Then, for residual subset of $\phi\in\hp$,
the image $\phi(C)$  is FAP (has no $3$-term APs). 
\end{prop}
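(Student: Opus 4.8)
The plan is to show that the set of ``bad'' maps is meager in \F\ and then invoke Baire category. Write $C=\bigcup_n K_n$ with $K_n\subset\ui$ closed, nowhere dense and increasing. A map $\phi$ fails to make $\phi(C)$ FAP precisely when there are $a<b<c$ in $C$ with $\phi(a)+\phi(c)=2\phi(b)$. Since $a<b<c$, I can trap the three points by rationals $u_1<u_2<v_1<v_2$ and indices $i,j,k$ so that $a\in A_1:=K_i\cap[0,u_1]$, $b\in A_2:=K_j\cap[u_2,v_1]$, $c\in A_3:=K_k\cap[v_2,1]$; these three compact nowhere dense sets are pairwise separated, so every triple drawn from them is automatically strictly increasing. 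Letting $\sigma=(i,j,k,u_1,u_2,v_1,v_2)$ range over the (countable) index set, I set $\mathcal B_\sigma=\{\phi\in\F:\exists\,(a,b,c)\in A_1\times A_2\times A_3,\ \phi(a)+\phi(c)=2\phi(b)\}$, so that the bad set is the countable union $\bigcup_\sigma\mathcal B_\sigma$. Each $\mathcal B_\sigma$ is \emph{closed}: $A_1\times A_2\times A_3$ is compact and $(\phi,x)\mapsto\phi(x)$ is continuous on $\C\times\ui$, so a limit of maps in $\mathcal B_\sigma$ inherits a witnessing triple along a convergent subsequence.

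The heart of the matter is showing each closed $\mathcal B_\sigma$ is \emph{nowhere dense}, i.e. given $\phi$ and $\eps>0$ I can find $\psi\in\F$ with $\|\psi-\phi\|<\eps$ and $\psi\notin\mathcal B_\sigma$. First approximate $\phi$ by an increasing $\psi_1\in\hp$, so that the three images $\psi_1(A_m)$ are again compact nowhere dense. The crucial preliminary is a \emph{compression lemma}: a compact nowhere dense subset of $\ui$ can be carried, by a homeomorphism arbitrarily close to the identity, onto a set of box-counting dimension $0$. Applying this to $\psi_1(A_1)\cup\psi_1(A_2)\cup\psi_1(A_3)$ produces $h$ near the identity with $\psi:=h\circ\psi_1$ close to $\phi$ and each $\psi(A_m)$ of dimension $0$. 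Now I perturb only the middle: let $\rho_s$ be supported in $[u_1,v_2]$, equal to the constant $s$ on $[u_2,v_1]\supseteq A_2$ and ramped linearly across the gaps $[u_1,u_2]$ and $[v_1,v_2]$; for small $s$ the map $\psi_s:=\psi+\rho_s$ lies in \F, is close to $\phi$, and satisfies $\psi_s(a)=\psi(a)$, $\psi_s(c)=\psi(c)$, $\psi_s(b)=\psi(b)+s$. A relation for $\psi_s$ then forces $2s\in\psi(A_1)+\psi(A_3)-2\psi(A_2)$, a sum of three sets of box dimension $0$, hence itself of box dimension $0$ and so Lebesgue-null. Its complement is dense, so a good $s$ exists arbitrarily near $0$, yielding $\psi_s\notin\mathcal B_\sigma$.

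For the assembly, $\bigcup_\sigma\mathcal B_\sigma$ is meager in the complete metric space \F, so its complement is residual; intersecting with the residual set $\hp$ gives a residual set of $\phi\in\hp$ avoiding every relation. For such an (increasing, injective) $\phi$, any $3$-term AP in $\phi(C)$ pulls back to $a<b<c$ in $C$ with $\phi(a)+\phi(c)=2\phi(b)$, which is exactly what has been excluded; hence $\phi(C)$ is FAP, as required.

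The main obstacle is the compression lemma. Everything else is routine bookkeeping, but pushing a nowhere dense set --- which may well carry positive Lebesgue measure, as a fat Cantor set does --- down to dimension zero by a \emph{near-identity} homeomorphism, uniformly across all scales, is the step demanding real work. I expect to extract it from the fact that nowhere density supplies, in every window and at every scale, gaps into which length can be redistributed while displacing points by less than $\eps$; iterating this redistribution scale by scale should drive the box dimension of the image to $0$ while keeping the homeomorphism within $\eps$ of the identity.
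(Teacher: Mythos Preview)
Your strategy---writing the bad set as a countable union of closed sets $\mathcal B_\sigma$ indexed by the \emph{location} of the three points and proving each nowhere dense---is sound and genuinely different from the paper's. The paper instead parametrizes by \emph{step size}: with $\H_\eps(C)=\{\phi\in\hp:\phi(C)\text{ has no 3-term APs of step }\geq\eps\}$ it shows each $\H_\eps(C)$ contains a dense open subset of $\hp$, so $\bigcap_k\H_{1/k}(C)$ is residual. Density comes from a \emph{single-scale} compression: partition $[0,1]$ by points off $f(\bar C)$ into pieces shorter than $\eps$, pick a finite AP-free set $\{p_k\}$ with one $p_k$ per piece, and contract $f(C)$ into tiny neighbourhoods of the $p_k$; openness is then a short compactness argument. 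What this buys is that the hard step is a one-shot construction at a single scale, with no dimension theory and no auxiliary translation parameter.

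Two issues with your route as written. First, the compression lemma you rely on (push a compact nowhere dense set to upper box dimension $0$ by a near-identity homeomorphism) demands control at \emph{all} scales; it is true, and your closing sketch points the right way, but it is markedly more work than needed. Even inside your own framework a one-scale compression suffices: since $A_1,A_2,A_3$ are separated by the gaps $[u_1,u_2]$ and $[v_1,v_2]$, any would-be AP $(\psi(a),\psi(b),\psi(c))$ under an increasing $\psi$ has step at least $\min\big(\psi(u_2)-\psi(u_1),\ \psi(v_2)-\psi(v_1)\big)>0$, so squeezing $\psi_1(A_1\cup A_2\cup A_3)$ near a finite AP-free configuration at a scale below this gap (exactly the paper's trick) already lands you outside $\mathcal B_\sigma$, without the dimension-$0$ step or the parameter~$s$. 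Second, a small gap: $\psi_s=\psi+\rho_s$ need not stay in $\F$ for any $s\neq0$, because on the descending ramp (say $[v_1,v_2]$ when $s>0$) monotonicity requires $\psi(y)-\psi(x)\geq s(y-x)/(v_2-v_1)$ for all $x<y$ there, and a generic homeomorphism has no positive lower Lipschitz bound. This is easily repaired---first replace $\psi$ on $[u_1,u_2]$ and $[v_1,v_2]$ by its affine interpolant, which leaves $\psi|_{A_m}$ untouched---but it should be said.
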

Since a meager set is a countable union of nowhere dense sets, it is enough to
prove the above proposition under the weaker assumption that $C$ is nowhere dense. 
Indeed, a meager set $C$ has a representation in the form
$
C=\cup_{i=k}^\infty C_k
$
where $C_k$ are nowhere dense. Then the unions $U_k=\cup_{i=1}^k C_i$ form 
a nested sequence of nowhere dense sets,  and $\phi(C)$  is FAP 
if all $\phi(U_k)$ are. 

Let
\begin{equation}\label{eq:hec}
\H_\eps(C)=\{\phi\in\hp\mid \phi(C) \text{ has no $3$-term APs of step }d\geq \eps\}.
\end{equation}

In the proof of Proposition \ref{prop:meag2} we need the following lemma. Its proof is provided
in the end of the next section.
\begin{lem}\label{lem:last}
Let\, $C\subset\ui$  be a nowhere dense subset and $\eps>0$. Then $\H_{\eps}(C)$ contains a dense
open subset of $\H^+$. In particular, $\H_{\eps}(C)$ is residual in $\H^+$.
\end{lem}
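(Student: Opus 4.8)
The plan is to prove Lemma~\ref{lem:last} by exhibiting $\H_\eps(C)$ as a countable intersection of open dense subsets of $\HP$, which gives both the residual conclusion and (via the Baire category theorem, noting that $\HP$ is residual in the closed, hence complete, space $\F$) the fact that it contains a dense open set up to the stated meagerness. First I would fix a nowhere dense $C\subset\ui$ and, since $C$ and its closure $\bar C$ generate the same APs in the image and $\bar C$ is still nowhere dense, assume without loss of generality that $C$ is \emph{closed} and nowhere dense. The key geometric observation is that a $3$-term AP in $\phi(C)$ of step $d\geq\eps$ consists of three points $\phi(a)$, $\phi(b)$, $\phi(c)$ with $a<b<c$ in $C$ satisfying the linear relation $\phi(a)+\phi(c)=2\phi(b)$, together with the spacing condition $\phi(b)-\phi(a)\geq\eps$.

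Next I would set up the open-dense structure. For a finite mesh parameter, partition $\ui$ into finitely many closed intervals and define, for each admissible triple of mesh intervals $(I,J,K)$ lying left-to-right with the gap between $I$ and $J$ (and between $J$ and $K$) forced to be at least $\eps$, the set of $\phi\in\HP$ for which \emph{no} triple $a\in C\cap I$, $b\in C\cap J$, $c\in C\cap K$ yields the AP relation $\phi(a)-2\phi(b)+\phi(c)=0$. Since $C$ is closed, each $C\cap I$ is compact, and the condition $\phi(a)-2\phi(b)+\phi(c)=0$ is a closed condition on the compact product of triples; its \emph{violation everywhere} is therefore an open condition on $\phi$ in the $\|\cdot\|_\infty$ topology. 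Taking the intersection over all (countably many) mesh sizes and all admissible triples recovers $\H_\eps(C)$, so it remains to show each such set is dense in $\HP$.

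Density is the crux and the step I expect to be the main obstacle. Given an arbitrary $\phi_0\in\HP$ and $\delta>0$, I would perturb $\phi_0$ by postcomposing or locally adjusting it on the nowhere-dense set $C$ so as to destroy the exact linear relation $\phi(a)+\phi(c)=2\phi(b)$ for all the finitely many relevant interval triples, while moving $\phi_0$ by less than $\delta$ in sup norm and keeping the map strictly increasing. The leverage comes precisely from nowhere density: between the closed pieces $C\cap I$, $C\cap J$, $C\cap K$ there are open gaps in which I have freedom to bend $\phi$ up or down. Concretely, I would insert a small, strictly monotone bump supported in a gap adjacent to $J$ that raises or lowers $\phi$ on $C\cap J$ relative to $C\cap I$ and $C\cap K$ by a generic amount, forcing $\phi(a)-2\phi(b)+\phi(c)$ to become nonzero for every triple; a transversality or measure-zero argument shows a full-measure set of bump heights avoids the finitely many (or compactly parametrized) vanishing loci. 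The care required is to perform this simultaneously for all finitely many triples at a given mesh, which is why working one mesh size at a time and intersecting is essential—each individual open dense set is handled by a single controlled perturbation, and Baire assembles them. The final point is to confirm the stronger open-dense conclusion of the lemma rather than merely residual: since a finite mesh already gives an open dense set whose intersection over triples is a \emph{finite} intersection of open dense sets, and such a finite intersection is again open and dense, one fixed sufficiently fine mesh (depending on $\eps$) produces the required dense open subset of $\HP$ directly.
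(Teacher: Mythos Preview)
Your outline has two genuine gaps, one in the decomposition and a more serious one in the density step.

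\textbf{Domain versus image.} The ``admissibility'' condition you impose on a triple $(I,J,K)$---that the gap between $I$ and $J$ be at least~$\eps$---lives in the \emph{domain}, while the step condition $d\geq\eps$ defining $\H_\eps(C)$ is measured in the \emph{image} $\phi(C)$. Since $\phi\in\HP$ is an arbitrary homeomorphism, no fixed domain mesh captures which triples can produce an image AP of step $\geq\eps$; your countable intersection is therefore not $\H_\eps(C)$. (In fact this whole mesh machinery is unnecessary: once $C$ is taken compact, a direct compactness argument shows $\H_\eps(C)$ is itself open in $\HP$, so only density needs work.)

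\textbf{The density argument does not go through.} Your ``generic bump height'' heuristic assumes the bad parameter set has measure zero, but here it is typically an interval. Concretely: if your bump shifts $\phi$ on $C\cap J$ by a constant $t$ while leaving $\phi$ on $C\cap I$ and $C\cap K$ fixed, then for $(a,b,c)\in(C\cap I)\times(C\cap J)\times(C\cap K)$ the AP defect becomes
\[
\bigl(\phi_0(a)-2\phi_0(b)+\phi_0(c)\bigr)-2t.
\]
As $(a,b,c)$ ranges over this compact (uncountable) product, the quantity $\phi_0(a)-2\phi_0(b)+\phi_0(c)$ sweeps out a whole range of values; hence the set of $t$ for which \emph{some} triple has defect zero is an interval, not a null set, and no small generic $t$ avoids it. A one–parameter transversality argument cannot kill a continuum of constraints here.

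\textbf{How the paper does it.} The paper proves density by a concrete image-side construction: given $f\in\HP$, it covers the nowhere dense set $f(C)$ by finitely many short intervals $Y_k$ of length $<\eps$, picks points $p_k\in Y_k$ so that the finite set $\{p_k\}$ contains no $3$-term AP, and then post-composes $f$ with a homeomorphism that contracts each $Y_k$ into a tiny interval $Z_k$ around $p_k$. The image $g(C)$ then sits in $\bigcup Z_k$, and any $3$-term AP of step $\geq\eps$ in $\bigcup Z_k$ would force an approximate $3$-term AP among the $p_k$, contradicting their choice. Openness is handled separately by a compactness estimate. The essential idea you are missing is that one must \emph{squeeze} $\phi(C)$ near a finite AP-free configuration, not merely nudge it by an additive bump.
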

\begin{proof}[Proof of Proposition \ref{prop:meag2} assuming Lemma \ref{lem:last}] 
We may assume that $C$ is nowhere dense (see the sentence following 
Proposition~\ref{prop:meag2}). We may also assume that $C$ is compact 
(otherwise replacing $C$ by its closure $\bar C$). 

By Lemma \ref{lem:last}, each of the sets $\H_\eps(C)$, $\eps>0$, is residual in  $\H^+$.
It follows that the set  $\H_0(C)=\cap_{k=1}^\infty \H_{1/k}(C)$ is 
residual. It is also clear that,  for $\phi\in\H_0(C)$,  the images $\phi(C)$  are FAP.

This completes the proof of Proposition \ref{prop:meag2}.
\end{proof}


\section{Proof of Lemma \ref{lem:last}}
First we prepare some auxiliary results.
\begin{lem}\label{lem:ap31}
Let  $C\subset\ui$ be a nowhere dense set, let $f\in\hp$ and let $\eps>0$ be given.
Then there exists $g\in\hp$ such that $\|g-f\|<\eps$ and the set  $g(C)$  has no
$3$-term APs with step $d\geq\eps$.
\end{lem}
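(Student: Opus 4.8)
The plan is to build $g$ by re-spacing $f$ across a fine partition of the domain whose cut points avoid $C$, crushing each resulting piece of $C$ into a very short interval, and placing these short intervals at centres chosen \emph{generically} so that they contain no exact $3$-term AP. Since every AP of step $\geq\eps$ is then forced to meet three distinct, well-separated pieces, a perturbation estimate rules it out.

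First I would reduce to the case that $C$ is compact, replacing $C$ by its closure $\bar C$ (still nowhere dense, and $g(\bar C)$ free of long APs forces the same for $g(C)\subseteq g(\bar C)$). Since $C$ is nowhere dense, its complement is dense, so I can pick cut points $0=x_0<x_1<\dots<x_N=1$ with $x_i\notin C$ for $0<i<N$, and with $N$ large enough (uniform continuity of $f$) that each image interval $J_i=[f(x_{i-1}),f(x_i)]$ has length $<\eps/2$. Setting $C_i=C\cap[x_{i-1},x_i]$, compactness of $C$ together with $x_{i-1},x_i\notin C$ forces $C_i\subset(x_{i-1},x_i)$ at positive distance from the endpoints, so there is room to crush each $C_i$ inside its block.

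Next I would choose target centres $M_1<\dots<M_N$ with $M_i\in J_i^\circ$ such that the finite set $\{M_i\}$ contains no $3$-term AP. This is the one genuinely necessary idea: for each triple $i<j<k$ the configurations with $M_i+M_k=2M_j$ form a hyperplane, so finitely many hyperplanes must be avoided, and a generic point of the nonempty open region $\{M_1<\dots<M_N,\ M_i\in J_i^\circ\}$ avoids all of them. Set $\gamma=\min_{i<j<k}\abs{M_i+M_k-2M_j}>0$ and fix $\eta<\min(\gamma/4,\eps/2)$. I then construct $g\in\hp$ piecewise: on each $[x_{i-1},x_i]$ let $g$ be an increasing homeomorphism onto $[w_{i-1},w_i]$, where $w_0=0$, $w_N=1$ and $w_i$ is chosen within $\eps/2$ of $f(x_i)$, designed to carry a closed subinterval containing $C_i$ onto $[M_i-\eta,M_i+\eta]$ (possible for small $\eta$, since $C_i$ sits compactly in the interior). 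These pieces glue to an element of $\hp$, and $g(C)\subseteq\bigcup_i[M_i-\eta,M_i+\eta]$. The bound $\|g-f\|<\eps$ is routine bookkeeping: for $x\in[x_{i-1},x_i]$ both $f(x)$ and $g(x)$ lie within $\eps/2$ of $J_i$, and $\abs{J_i}<\eps/2$.

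Finally I would verify AP-freeness. Suppose $p<q<r$ in $g(C)$ form an AP of step $d\geq\eps$; each lies in some $[M_i-\eta,M_i+\eta]$. If two of them lay in the same block their difference would be $\leq2\eta<\eps$, yet any two of $p,q,r$ differ by at least $d\geq\eps$; hence the three points occupy distinct blocks $i<j<k$. Writing $p=M_i+e_p,\ q=M_j+e_q,\ r=M_k+e_r$ with $\abs{e_{\bullet}}\leq\eta$, the relation $p+r=2q$ gives $\abs{M_i+M_k-2M_j}\leq4\eta<\gamma$, contradicting the definition of $\gamma$. The only step requiring real care — and the main obstacle — is the simultaneous choice of AP-free centres $M_i$ inside the prescribed windows $J_i$; once that genericity argument is in place, the rest is control of the perturbation size.
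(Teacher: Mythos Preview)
Your argument is correct and is essentially the paper's own proof: crush the pieces of $C$ into short intervals around an AP-free finite set of centres and then rule out long APs via the $4\eta<\gamma$ perturbation estimate. The paper's packaging differs only slightly---it partitions the \emph{image} rather than the domain and writes $g=u\circ f$ for a piecewise-affine $u$ (so $\|g-f\|=\|u-\mathrm{id}\|<\eps$ is immediate), and it fixes the extreme centres at $0$ and $1$ from the outset, a boundary detail you gloss over (if $0\in C$ then $g(0)=0$ must lie in the first tiny interval, so $M_1$ cannot be chosen freely in $J_1^\circ$ before $\eta$ is fixed; taking $M_1=0$, $M_N=1$ and choosing the interior $M_i$ generically resolves this).
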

\begin{proof}
Without loss of generality, we assume that $\eps<1/2$. 
Pick an integer $r\geq3$  such that $r\eps>1$. 

Since $C$ is nowhere dense, so is $f(C)$, and one can select $r-1$ points\, 
$x_1,x_2,\ldots,x_{r-1}\in(0,1)\!\setminus f(\bar{C})$,
\[
0=x_0<x_1<x_2<\ldots<x_{r-1}<x_{r}=1,
\]
partitioning the unit interval into $r$ subintervals $X_{k}=(x_{k-1},x_k)$,
each shorter than $\eps$:
\[
0<\abs{X_k}=x_{k+1}-x_k<\eps \quad (1\leq k\leq r).
\]

Then one selects non-empty open subintervals $Y_{k}=(y^{-}_k,y_k^{+})\subset X_{k}$, $1\leq k\leq r$, in such
a way that the following four conditions are met:
\begin{align}
& (c1) \h2 f(C)\subset\bigcup_{k=1}^r\nolimits \bar Y_{k},\label{eq:cy}\\
& (c2) \h2 x_{k-1}<y_k^-<y_k^+<x_k  \text{ (i.e., }\,\bar Y_{k}\subset X_{k}), 
       \h2 \text{for } 2\leq k\leq r-1,\notag\\
& (c3) \h2 0=x_0=y_1^-< y_1^+<x_1, \ \text{ \small and }\notag\\
& (c4) \h2 x_{r-1}<y_r^-<y_r^+=x_r=1. \notag
\end{align}
That is, between $Y_j$ and $Y_{j+1}$ there exists $x_j \notin f(\bar{C})$ and 
$|Y_j|<|X_j|<\epsilon$ for all $j$.

Set $p_1=0$, $p_r=1$ and then select the $r-2$  points $p_k\in Y_{k}$, 
$2\leq k\leq r-1$,  so that the set $P=\{p_k\}_{k=1}^r$  contain no $3$-term APs. 
Then the sequence $(p_k)_1^r$ is strictly increasing, and
\[
\delta=\min_{1\leq m<n<k\leq r} \abs{p_m+p_k-2p_n}>0.
\]
Next, for  $1\leq k\leq r$,  we select open subintervals $Z_k\subset Y_k$,
each shorter than $\frac\del4$,  with $p_k\subset \bar Z_k$.

Define  $u\in\H$ to be the homeomorphism $\ui\to\ui$ which affinely contracts  
$\bar Y_k$  to $\bar Z_k$  and affinely expands the gaps between the intervals  
$\bar Y_k$  to fill it in. Note that
\begin{equation}\label{eq:cupyk}
\abs{u(x)-x}<\eps, \quad \text{ for }\, x\in\bigcup_{k=1}^r \bar Y_k,
\end{equation}
because  $x\in \bar Y_{k}$ implies $u(x)\in \bar Y_{k}$ and hence
$\abs{u(x)-x}\leq \abs{Y_k}<\abs{X_k}<\eps$. 

Since $u(x)-x$  is linear on each of the $(r-1)$ gaps between the intervals  $\bar Y_k$,
the inequality \eqref{eq:cupyk}  extends to the whole unit interval:
$
\|u(x)-x\|<\eps.
$

Define $g\in\H$ as the composition $g(x)=(u\circ f)x=u(f(x))$. Then
\[
\|g-f\|=\|u\circ f-f\|=\|u(x)-x\|<\eps.
\]

It remains to show that $g(C)$ has no $3$-term APs with step $d\geq\eps$.
In view of \eqref{eq:cy},
\[
\bigcup_{k=1}^r \bar Z_k= h(\bigcup_{k=1}^r \bar Y_k)\supset h(f(C))=g(C),
\]
so it would suffice to proof that  $\bigcup_{k=1}^r \bar Z_k$
has no $3$-term APs with step  $d\geq\eps$.

Assume to the contrary that such an AP exists, say $a_1, a_2, a_3$, with 
$d=a_2-a_1=a_3-a_2\geq\eps$.
Let $a_i\in\bar Z_{k_i}$, for $i=1,2,3$. These $k_i$ are uniquely determined, and since 
$|Z_{k_i}|<|X_{k_i}|<\eps\leq d$,  we have $k_1<k_2<k_3$.
Taking in account that $|a_i-p_{k_i}|\leq |Z_{k_i}|<\del/4$, we obtain
\begin{align*}
|a_1+a_3-2a_2|&\geq |p_{k_1}+p_{k_3}-2p_{k_2}|-\\
  &-(|a_1-p_{k_1}|+|a_3-p_{k_3}|+2|a_2-p_{k_2}|)>\del-4\cdot\tfrac\del4=0,
\end{align*}
a contradiction with the assumption that  $a_1, a_2, a_3$ forms an AP.
\end{proof}
\begin{cor}\label{cor:1}
Let  $C\subset\ui$ be a nowhere dense set. Then for all $\eps>0$, the sets 
$\H_{\eps}(C)$ (defined by \eqref{eq:hec}) are dense in $\hp$.
\end{cor}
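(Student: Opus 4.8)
The plan is to deduce the corollary almost immediately from Lemma~\ref{lem:ap31}, the only point being to decouple the two distinct roles played by the parameter $\eps$ in that lemma. I would observe first that the sets $\H_\eps(C)$ are monotone in $\eps$: if $0<\eps'\leq\eps$, then $\H_{\eps'}(C)\subseteq\H_\eps(C)$. Indeed, every $3$-term AP of step $d\geq\eps$ is in particular a $3$-term AP of step $d\geq\eps'$, so forbidding all APs of step $\geq\eps'$ in $\phi(C)$ is a strictly \emph{stronger} requirement than forbidding only those of step $\geq\eps$.

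To prove density, I would fix an arbitrary $f\in\hp$ and an arbitrary tolerance $\delta>0$ and seek $g\in\H_\eps(C)$ with $\|g-f\|<\delta$. The difficulty to confront is that in Lemma~\ref{lem:ap31} a single parameter simultaneously bounds the approximation error and sets the step threshold, so a direct application with parameter $\eps$ produces a $g$ lying only within $\eps$ of $f$---not within a prescribed $\delta$ that may be far smaller.

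To get around this, I would apply Lemma~\ref{lem:ap31} not with $\eps$ but with the smaller parameter $\eps_0=\min(\eps,\delta)>0$. The lemma then furnishes $g\in\hp$ with $\|g-f\|<\eps_0\leq\delta$ and with $g(C)$ free of $3$-term APs of step $d\geq\eps_0$; that is, $g\in\H_{\eps_0}(C)$. Since $\eps_0\leq\eps$, the monotonicity observation yields $g\in\H_{\eps_0}(C)\subseteq\H_\eps(C)$, so indeed $g\in\H_\eps(C)$ and $\|g-f\|<\delta$. As $f\in\hp$ and $\delta>0$ were arbitrary, $\H_\eps(C)$ is dense in $\hp$.

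There is no genuinely hard step here: the corollary is a formal consequence of the lemma. The one thing worth flagging is the monotonicity above---that tightening the step threshold only enlarges the forbidden family of progressions---which is precisely what allows a single approximation result produced at scale $\eps_0$ to serve at once as an approximation at the smaller scale $\delta$ and as a certificate of membership in $\H_\eps(C)$.
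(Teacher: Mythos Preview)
Your proof is correct and is essentially identical to the paper's own argument: both use the monotonicity $\H_{\eps'}(C)\subseteq\H_\eps(C)$ for $\eps'\leq\eps$ together with the $\eps$-density furnished by Lemma~\ref{lem:ap31} to conclude $\delta$-density for arbitrary $\delta>0$. The only cosmetic difference is that you package the choice as $\eps_0=\min(\eps,\delta)$ while the paper phrases it as ``$\H_\eps(C)$ is $\delta$-dense for every positive $\delta<\eps$.''
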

\begin{proof} 
Note that the sets $\H_{\eps}(C)$ are monotone in $\eps>0$: $\H_{\eps_2}(C)\subset\H_{\eps_1}(C)$ 
if $0<\eps_2<\eps_1$.

By the previous lemma (Lemma \ref{lem:ap31}), all sets $\H_{\eps}(C)$ are $\eps$-dense.
Then, for a given $\eps>0$, the set $\H_{\eps}(C)$ is $\del$-dense for every positive $\del<\eps$
(because even the smaller set $\H_{\del}(C)\subset\H_{\eps}(C)$ is  $\del$-dense).
This argument completes the proof of Corollary \ref{cor:1}.
\end{proof} 
\begin{lem}\label{lem:open}
Let  $C\subset\ui$ be a compact nowhere dense set, let $g\in\H$ and let $\eps>0$ be given.
Assume that the set  $g(C)$  has no $3$-term APs with step $d\geq\eps$.
Then there exists a $\del>0$  such that for all $h\in\H$ such that $\|h-g\|<\del$
the sets $h(C)$  have no $3$-term APs with step exceeding $2\eps$.
\end{lem}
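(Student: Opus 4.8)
The plan is to upgrade the qualitative hypothesis ``$g(C)$ has no $3$-term AP of step $\geq\eps$'' into a \emph{quantitative} lower bound measuring how far triples of $g(C)$ with large gaps are from being arithmetic progressions, and then to observe that this bound survives a sufficiently small perturbation of $g$. Throughout I measure the failure of a triple to be an AP by the continuous functional $\Phi(b_1,b_2,b_3)=|b_1+b_3-2b_2|$, which vanishes exactly when $(b_1,b_2,b_3)$ is an AP.

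First I would note that, since $C$ is compact and $g$ continuous, $g(C)$ is a compact subset of $\ui$ (this is the only place compactness is used; nowhere-density plays no role here). Consider the set of ordered triples
\[
T=\{(b_1,b_2,b_3)\in (g(C))^3 \mid b_2-b_1\geq\eps,\ b_3-b_2\geq\eps\},
\]
a closed, hence compact, subset of $(g(C))^3$ on which $\Phi$ is continuous. I claim $\Phi>0$ on $T$: a zero of $\Phi$ on $T$ would force $b_2-b_1=b_3-b_2=:d\geq\eps$, i.e.\ a genuine $3$-term AP of step $d\geq\eps$ inside $g(C)$, contradicting the hypothesis. By compactness $\Phi$ attains its minimum, so $\beta:=\min_{T}\Phi>0$ (if $T=\varnothing$ the same conclusion below holds trivially for any $\del\le\eps/2$).

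Next I would set $\del=\min(\eps/2,\ \beta/4)$ and argue by contradiction. Suppose $h\in\H$ satisfies $\|h-g\|<\del$ while $h(C)$ contains a $3$-term AP $a_1<a_2<a_3$ of step $d>2\eps$. Writing $a_i=h(c_i)$ with $c_i\in C$ and setting $b_i=g(c_i)\in g(C)$, the bound $|b_i-a_i|=|g(c_i)-h(c_i)|\leq\|g-h\|<\del$ gives, because the AP gaps $d>2\eps$ dominate the perturbation $2\del$, that $b_2-b_1>d-2\del>2\eps-2\del\geq\eps$ and likewise $b_3-b_2\geq\eps$; in particular $b_1<b_2<b_3$ and $(b_1,b_2,b_3)\in T$. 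On the other hand, using $a_1+a_3-2a_2=0$,
\[
\Phi(b_1,b_2,b_3)=\bigl|(b_1-a_1)+(b_3-a_3)-2(b_2-a_2)\bigr|\leq |b_1-a_1|+|b_3-a_3|+2|b_2-a_2|<4\del\leq\beta,
\]
contradicting $\Phi\geq\beta$ on $T$. Hence no such AP exists, which is the assertion of the lemma.

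I do not expect a serious obstacle: this is a standard compactness-plus-perturbation estimate, and the whole content is the passage from ``no AP of step $\ge\eps$'' to the uniform gap $\beta>0$. The only points needing care are the two scales $\eps$ and $2\eps$ — I need $\del\le\eps/2$ \emph{precisely} so that a step-$>2\eps$ AP of $h(C)$ pulls back to a triple of $g(C)$ whose gaps are still $\geq\eps$, placing it in $T$ — together with the choice $\del\le\beta/4$, which turns the strict estimate $\Phi<4\del$ into the contradiction $\Phi<\beta$. I would also remark that the orientation (increasing or decreasing) of $g$ and $h$ is irrelevant: once the $b_i$ are labelled to match $a_1<a_2<a_3$, the domination of $2\del$ by the AP gaps forces $b_1<b_2<b_3$ automatically.
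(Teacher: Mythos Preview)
Your proof is correct and essentially identical to the paper's: the same compact set $T$ of large-gap triples in $g(C)^3$, the same functional $\Phi(b_1,b_2,b_3)=|b_1+b_3-2b_2|$ with positive minimum $\beta$, and the same contradiction argument, the only cosmetic difference being your choice $\del=\min(\eps/2,\beta/4)$ versus the paper's $\min(\eps/2,\beta/5)$. Your side remarks (the case $T=\varnothing$, the irrelevance of nowhere-density here, and of the orientation of $g,h$) are accurate and not in the paper.
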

\begin{proof}
Let  
\[
M=\{(x_1,x_2,x_3)\in g(C)^3\mid x_2-x_1\geq \eps\, \text{ and }\, x_3-x_2\geq \eps\}.
\]
Then $M$ is compact, and $F\colon M\to\R$ defined by $F(x_1,x_2,x_3)=\abs{x_1+x_3-2x_2}$ 
assumes its minimum
\[
\gam=\min_{\bfx\in M} F(\bfx)>0
\]
which is positive because $g(C)$  has no $3$-term APs with step $d\geq\eps$.
Take $\del=\min(\eps/2,\gam/5)$.

Assume to the contrary that for some $h\in\H$ with $\|h-g\|<\del$,
the set $h(C)$ contains an AP with step $d'>2\eps$, i.e. that
there are $c_1,c_2,c_3\in C$ such that
\[
h(c_3)-h(c_2)=h(c_2)-h(c_1)>2\eps.
\]
Then, for both  $i=1,2$, we have
\[
g(c_{i+1})-g(c_i)> h(c_{i+1})-h(c_i)-2\del>2\eps-2\del\geq\eps,
\]
whence $(g(c_1),g(c_2),g(c_3))\in M$ and hence
\begin{align*}
\gam\leq F(g(c_1),g(c_2),g(c_3))&=\abs{g(c_1)+g(c_3)-2g(c_2)}\leq\\
 &\leq \abs{h(c_1)+h(c_3)-2h(c_2)}+4\del=0+4\del\leq\tfrac{4\gam}5<\gam,
\end{align*}
a contradiction.
\end{proof}
\begin{proof}[Proof of Lemma \em\ref{lem:last}]
It follows from Lemma \ref{lem:open} that there is an (intermediate) open subset $U\subset\hp$
such that
\[
\H_\eps(C)\subset U\subset \H_{2\eps}(C)\subset\hp.
\]
This set $U$  is dense in $\hp$ because its subset $\H_{\eps}(C)$ is (by Corollary~\ref{cor:1}).
Thus the set $\H_{2\eps}(C)$ contains an open dense subset $U\subset\hp$.
Since $\eps>0$ is arbitrary, the proof is complete.
\end{proof}


\end{document}